\theoremstyle{plain}
\newtheorem{prop}{Proposition}
\newtheorem{theo}[prop]{Theorem}
\newtheorem{coro}[prop]{Corollary}
\theoremstyle{remark}
\newtheorem{rema}[prop]{Remark}
\theoremstyle{definition}
\numberwithin{equation}{section}
\newenvironment{dedication}{\thispagestyle{empty}\itshape\raggedleft}{\par}
\newcommand{\A}{{\mathbb A}}
\newcommand{\C}{{\mathbb C}}
\newcommand{\PP}{{\mathbb P}}
\newcommand{\F}{{\mathbb F}}
\newcommand{\G}{{\mathbb G}}
\newcommand{\R}{{\mathbb R}}
\newcommand{\Z}{{\mathbb Z}}
\newcommand{\mo}{{\mathfrak o}}
\newcommand{\Val}{{\mathcal V}}
\newcommand{\eqto}{\stackrel{\lower1.5pt\hbox{$\scriptstyle\sim\,$}}\to}
\newcommand{\eqdashto}{\stackrel{\lower1.5pt\hbox{$\scriptstyle\sim\,$}}\dashrightarrow}
\DeclareMathOperator{\Gal}{Gal}
\DeclareMathOperator{\tr}{tr}
\DeclareMathOperator{\Pic}{Pic}
\DeclareMathOperator{\Spec}{Spec}
\DeclareMathOperator{\Hom}{Hom}
\DeclareMathOperator{\Br}{Br}
\DeclareMathOperator{\cores}{cores}
\begin{document}
\title{Brauer groups of involution surface bundles}
\author{Andrew Kresch}
\address{
  Institut f\"ur Mathematik,
  Universit\"at Z\"urich,
  Winterthurerstrasse 190,
  CH-8057 Z\"urich, Switzerland
}
\email{andrew.kresch@math.uzh.ch}
\author{Yuri Tschinkel}
\address{
  Courant Institute,
  251 Mercer Street,
  New York, NY 10012, USA
}
\address{Simons Foundation, 160 Fifth Av., New York, NY 10010, USA}
\email{tschinkel@cims.nyu.edu}

\date{September 7, 2018} 

\maketitle

\begin{dedication}
To David Mumford, with admiration.
\end{dedication}

\section{Introduction}
\label{sec:introduction}

A fundamental breakthrough in the study of rationality properties of
complex algebraic varieties was the construction, by Artin and Mumford, of 
examples of projective unirational threefolds with nontrivial Brauer group.
Along with the examples by Iskovskikh--Manin and Clemens--Griffiths, these provided the first instances of nonrational 
unirational complex threefolds, settling the long-standing L\"uroth problem. 
Even more important was the introduction of new tools and concepts:
\begin{itemize}
\item Brauer groups \cite{AM},
\item Birational rigidity  \cite{MI}, and
\item Intermediate Jacobians  \cite{CG}.
\end{itemize}
All of these have triggered major developments in algebraic geometry,  
see e.g.,  \cite{manin-ts}, \cite{beau-survey}, \cite{pukh-book}, \cite{cheltsov-survey},  and the references therein. 

The closely related Zariski problem concerns stable rationality, i.e., 
rationality of the product of the variety in question with some projective space.
The Artin--Mumford examples are not stably rational, while there exist 
threefolds with a nontrivial intermediate Jacobian obstruction to rationality
and which are nevertheless stably rational \cite{BCTSS}.
It is currently unknown whether or not birational rigidity obstructs
stable rationality.

Recent years have seen a tremendous revival of interest in the Artin--Mumford construction
in connection with the Specialization method, introduced by Voisin \cite{voisin}, 
and developed by Colliot-Th\'el\`ene--Pirutka \cite{CTP},
Nicaise--Shinder \cite{NSh}, and Kontsevich--Tschinkel \cite{KT}.
These new techniques relate the failure of
(stable) rationality of a very general member of a family to the
presence of a Brauer group obstruction in a single member of the family.
Often the general members of the family possess no
evident obstructions to rationality, while the
(mildly singular) special member is of Artin--Mumford type.
This led to tremendous advances in the study of stable rationality, see, e.g., 
 \cite{ct-pir-cyclic}, \cite{totaro-JAMS}, \cite{beauvillesexticdouble}, and the surveys \cite{Voisin-survey},  \cite{peyre-ast}. 
In particular, this allowed to: 
\begin{itemize}
\item settle the long-standing open problem of  stable rationality  for rationally connected threefolds, showing that
very general (in their families) nonrational threefolds are not stably rational  
(with the exception of cubic threefolds) \cite{HKTconic},  \cite{HTfano}, \cite{krylovokada};
\item understand the behavior of rationality under deformations, by 
exhibiting smooth families of
complex three- and fourfolds with varying (stable) rationality properties \cite{HKTthreefolds}, \cite{HPT}.
\end{itemize}

These developments focused the attention on varieties with nontrivial Brauer group and mild singularities arising
in interesting families of rationally connected varieties, e.g.,
conic and higher-dimensional quadric bundles over projective spaces \cite{abp},  \cite{schreieder2}.
The computation of the Brauer group on such varieties is an
interesting problem by itself, studied, e.g., by Colliot-Th\'el\`ene--Ojanguren in \cite{CTO} and by Colliot-Th\'el\`ene, 
in the case of conic bundles over rational surfaces  \cite[Thm. 3.13]{pirutkaunramified}.
More recently, Pirutka gave an explicit combinatorial algorithm for the computation of
the Brauer group of quadric surface bundles over rational surfaces \cite{pirutkaunramified}.
It became a crucial ingredient in proofs of failure of stable rationality in \cite{HPT}, \cite{HPT-quadric}, \cite{HPTdouble}, \cite{schreieder1}. 

In these investigations it was important to construct good, i.e., mildly singular, birational models
of varieties fibered over rational surfaces. Already the case of conic and Brauer--Severi surface 
bundles is quite involved \cite{KTsurf}.
In \cite{KTinvsurf}, we studied quadric surface bundles and, more generally,
involution surfaces bundles, with special attention to producing and deforming such models.
In this paper, we use these models to give a combinatorial algorithm
for the computation of the Brauer group (Theorem \ref{thm.rationalsurfacecase}), generalizing
Pirutka's algorithm. The inspiration comes from the work of Artin and Mumford in the
conic bundle case. 

\

\noindent {\bf Acknowledgments:}  We are very grateful to 
Brendan Hassett and Alena Pirutka 
for discussions on these and related topics. The second author
was partially supported by NSF grant 1601912. This work was done while the second author 
was visiting the FIM, at ETH Zurich. Its hospitality is greatly appreciated.

\section{Geometry of involution surface bundles}
\label{sec:geometry}

Let $K$ be a field of characteristic different from $2$.
A surface over $K$ that is
geometrically isomorphic to $\PP^1\times \PP^1$ is called an
\emph{involution surface}.
Such a surface is classified by the pair $(L/K,\beta)$, where 
\begin{itemize}
\item $L/K$ is the 
\emph{discriminant extension}, a degree $2$
\'etale $K$-algebra, and
\item $\beta$ is a $2$-torsion element of the
Brauer group of $L$ that is the class of a quaternion algebra.
\end{itemize}
We remark that
for the unique nontrivial $K$-automorphism $\tau$ of $L$,
Brauer group elements $\beta$ and $\tau^*(\beta)$ determine
isomorphic involution surfaces.
This ambiguity is eliminated by
fixing a compatible collection, for any
$K$-algebra $\Lambda$, of identifications of the set of
rulings (maps onto a conic) of $X_\Lambda$ with $\Hom_K(\Lambda,L)$.
We assume that such identifications are fixed, without explicit mention,
whenever a degree $2$ \'etale $K$-algebra and a Brauer group element are
mentioned in connection with an involution surface.

We work over an algebraically closed ground field $k$
of characteristic different from $2$ and let $S$ be a nonsingular
algebraic variety over $k$.
An \emph{involution surface bundle} over $S$ is a flat generically smooth projective morphism
$$
\pi\colon X\to S,
$$ 
such that if $U\subset S$ denotes the locus over which
$\pi$ is smooth, then the fiber over every point of $U$
is an involution surface.
Involution surface bundles were studied in \cite{KTinvsurf}, where we identified 
four geometric types of degenerations of involution surfaces, Types I, II, III, and IV.
Involution surface bundles with only these geometric types of degenerations, and
satisfying further conditions restricting the
singularities of the total space $X$, 
were called \emph{mildly degenerating simple} involution surface bundles.

From now on we suppose that $S$ is a smooth projective surface over $k$.
Good models will be mildly degenerating simple involution surface bundles
over the complement of a codimension $2$ set $Z\subset S$ (finitely many points),
with additional degeneration types permitted at points of $Z$.
Specifically, an involution surface bundle over $S$ is defined to be
\emph{simple} if the complement of $U$ is a simple normal crossing divisor
\[ D=D_1\cup D_2\cup D_3\cup D_4, \]
where we take $Z=D^{\mathrm{sing}}$.
The geometric fibers should have Type I over $D_1$, Type II over $D_2$,
Type III over $D_3$, and Type IV over $D_4$.
Additionally, $D_1$ and $D_3$ are required to be smooth, disjoint from each
other, and disjoint from $D_4$, and $\pi$ is required to have
one of $6$ explicit isomorphism types \'etale locally
at every point of $Z$.

By \cite[Thm.\ 10]{KTinvsurf}, any fibration $\pi\colon X\to S$ whose generic fiber is
an involution surface admits a model
$\tilde\pi\colon \widetilde{X}\to \widetilde{S}$ over some smooth surface $\widetilde{S}$ with
proper birational morphism to $S$, which is a simple involution surface bundle.
The proof translates into the following recipe.
Let $K$ denote the function field of $S$, and $L$, the discriminant extension
of the generic fiber.
If $L$ is a quadratic field extension, then from a model we obtain, by
birational modification, a finite degree $2$ morphism of
smooth surfaces $\widetilde{T}\to \widetilde{S}$, while
in case $L\cong K\times K$
we take $\widetilde{S}=S$ and $\widetilde{T}=S\sqcup S$.
Let $\beta$ denote the $2$-torsion element of $L$, corresponding to the
generic fiber of $\pi$, represented geometrically by a conic bundle
over $\widetilde{T}$.
This is put into a standard form after
further blowing up $\widetilde{T}$, which may be done compatibly with
blow-ups of $\widetilde{S}$.
The standard conic bundle determines, by \cite[Thm.\ 13]{KTinvsurf},
a simple involution surface bundle, with following data:
\begin{itemize}
\item The branch locus of $\widetilde{T}\to \widetilde{S}$ is $D_1\cup D_3$,
where over $D_1$ the element $\beta$ is unramified and the
conic bundle has smooth fibers, and over $D_3$ the Brauer group element is
ramified and the conic bundle has, generically, reduced singlar fibers.
\item The additional divisors where $\beta$ is ramified
lie over $D_2$ and $D_4$.
\end{itemize}

Every generically smooth quadric surface bundle determines
a simple involution surface bundle with $D_3=D_4=\emptyset$.
However, there exist simple involution surface bundles with
$D_3=D_4=\emptyset$ which are not models of quadric surface bundles.
A necessary and sufficient condition to be a model of a quadric surface bundle
is that $\beta$ lies in the kernel of the corestriction homomorphism
$\Br(L)[2]\to \Br(K)[2]$, or equivalently lies in the image of the
restriction homomorphism $\Br(K)[2]\to \Br(L)[2]$.

\section{Resolution}
\label{sec.resolution}
The hypersurface singularity defined by $uv=xyz$ has singular locus
consisting of the union of three curves and may be resolved by
blowing up first one of the curves, then the proper transforms of the other two.
These assertions are straightforward to verify (over an arbitrary field
$k$), either
by recognizing $uv=xyz$ as defining the affine toric variety given by the cone
\[ \R_{\ge 0}\langle
(1,0,0,0), (0,1,0,0), (1,0,1,0), (0,1,1,0), (1,0,0,1), (0,1,0,1)
\rangle \]
in $N\otimes_\Z\R$, where $N=\Z^4$, or by computing the blow-ups directly
in local coordinates.
For the first blow-up,
the fiber over the origin is the union of two copies of $\PP^2$ along
a line, and smooth quadric surfaces over all other points of the
center of the blow-up.
All the fibers of the second blow-up (over the center of blow-up) are
smooth quadric surfaces.

We recall that a proper morphism of finite-type schemes over a field $k$
is said to be
universally $CH_0$-trivial if the induced push-forward
morphism on the groups $CH_0$ of zero-cycles up to rational equivalence
is an isomorphism, not only over the given field but also after base-change
to an arbitrary extension field;
a proper scheme is said to be
universally $CH_0$-trivial if the structure morphism to $\Spec(k)$ is.
A sufficient condition for a proper morphism to be
universally $CH_0$-trivial is that its fibers over all points
(closed or not) are universally $CH_0$-trivial \cite[Prop.\ 1.8]{CTP}.

\begin{theo}
\label{thm.resolution}
Let $k$ be an algebraically closed field of characteristic different from $2$,
$S$ a smooth projective surface over $k$, and 
$\pi\colon X\to S$ a simple involution surface bundle, with singular fibers
over
\[ D = D_1\cup D_2\cup D_3\cup D_4. \]
Then by
\begin{itemize}
\item blowing up $X$ along the copy of the normalization of
$D_2$, which is the closure in $X$ of the
singular locus of $\pi^{-1}(S\smallsetminus D^{\mathrm{sing}})$:
\[ \varphi\colon X'\to X, \]
\item blowing up $X'$ along its singular locus, which consists of two
disjoint curves in the fiber over every point of $D_2^{\mathrm{sing}}$:
\[ \varphi'\colon X''\to X', \]
\end{itemize}
we obtain $\varphi\circ \varphi'$,
a universally $CH_0$-trivial desingularization $X''\to X$.
\end{theo}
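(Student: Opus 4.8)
The plan is to prove universal $CH_0$-triviality by verifying the sufficient condition from \cite[Prop.\ 1.8]{CTP}: that the composite $\varphi\circ\varphi'$ has universally $CH_0$-trivial fibers over every point of $X$. Since both $\varphi$ and $\varphi'$ are blow-ups, they are isomorphisms away from their centers, so the only work is at points lying over the blow-up centers. I would first reduce to a local analysis: away from $D^{\mathrm{sing}}=Z$, the structure of a simple involution surface bundle is governed by the four degeneration Types I--IV, and the blow-up center of $\varphi$ meets the fibers only over $D_2$ (where $\pi$ has Type II fibers), so I must understand the fibers of $\varphi$ and then of $\varphi'$ in these geometric situations, and finally treat the finitely many points of $Z$ using the $6$ explicit \'etale-local isomorphism types listed in the definition.

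The key geometric input is the model computation in Section \ref{sec.resolution}: the hypersurface singularity $uv=xyz$ resolves by blowing up one of the three curves in its singular locus and then the proper transforms of the other two, with the fiber over the origin of the first blow-up being two copies of $\PP^2$ glued along a line (a universally $CH_0$-trivial configuration, since $\PP^2$ and $\PP^1$ are), and all fibers of the second blow-up being smooth quadric surfaces (hence rational, hence universally $CH_0$-trivial). I would match the \'etale-local structure of $\pi$ along $D_2$ and at points of $Z$ to this toric model: over generic points of $D_2$ the singular locus of $\pi^{-1}(S\smallsetminus Z)$ is a smooth curve whose normalization is the center of $\varphi$, and blowing it up replaces each Type II fiber by a reducible but universally $CH_0$-trivial fiber, so the fibers of $\varphi$ there are single points or the $\PP^2\cup_{\PP^1}\PP^2$ configuration, all universally $CH_0$-trivial. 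After $\varphi$, the new singular locus $X'^{\mathrm{sing}}$ consists of two disjoint curves in the fiber over each point of $D_2^{\mathrm{sing}}$, and blowing these up gives, by the toric model, fibers that are smooth quadric surfaces or lower-dimensional rational pieces.

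For the fiber-by-fiber verification I would argue that every fiber of $\varphi\circ\varphi'$ over a point of $X$ is either a reduced point, a projective space, a smooth quadric, or a chain/tree of such rational varieties glued along rational subvarieties; each of these is universally $CH_0$-trivial, and a proper connected union of universally $CH_0$-trivial varieties meeting along universally $CH_0$-trivial loci is again universally $CH_0$-trivial by the standard localization/Mayer--Vietoris argument for $CH_0$. Applying \cite[Prop.\ 1.8]{CTP} then yields that $\varphi\circ\varphi'$ is universally $CH_0$-trivial; that it is a desingularization follows because after the two blow-ups dictated by the $uv=xyz$ model the total space is smooth, which is exactly what the toric resolution in Section \ref{sec.resolution} guarantees \'etale-locally.

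The main obstacle I anticipate is the bookkeeping at the finitely many points of $Z=D^{\mathrm{sing}}$: away from $Z$ the Type II analysis is essentially the single normal-crossings model, but at the $6$ explicit \'etale-local isomorphism types one must check that the two prescribed blow-ups simultaneously resolve the singularity and keep all fibers universally $CH_0$-trivial, including the interaction with the other divisors $D_1,D_3,D_4$ passing through $Z$. Identifying each of these local models with (a base-change or product of) the standard $uv=xyz$ picture, and confirming that the centers of $\varphi$ and $\varphi'$ restrict correctly under that identification, is where the real care is needed; the rest is a routine assembly of $CH_0$-triviality of rational building blocks.
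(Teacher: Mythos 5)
Your overall strategy coincides with the paper's: invoke \cite[Prop.\ 1.8]{CTP} to reduce universal $CH_0$-triviality of $\varphi\circ\varphi'$ to universal $CH_0$-triviality of its fibers over all points, and verify this by an \'etale-local analysis along $D_2$ and at the points of $D^{\mathrm{sing}}$, with the resolution of $uv=xyz$ from Section \ref{sec.resolution} as the model computation and with the observation that points, projective spaces, quadric surfaces, and $\PP^2\cup_{\PP^1}\PP^2$ are all universally $CH_0$-trivial. That much is exactly how the paper concludes.

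There is, however, a concrete gap in your reduction step: you propose to identify \emph{each} of the \'etale-local models with (a base change or product of) the $uv=xyz$ picture, and that is not true. The paper's proof sorts the singularities of $X$ into three genuinely different local types: (a) ordinary double point singularities along the copy of the normalization of $D_2$ (generically along $D_2$, and where Type I meets Type II); (b) the hypersurface singularity $uv=xyz$ at two special points of the fiber where two Type II components meet, together with further ordinary double points along curves; and (c) singularities of type $\mathsf{D}_\infty$ (pinch-point type) along the copy of $D_2$ where Type III or Type IV meets Type II. Only case (b) is governed by the toric $uv=xyz$ model. Cases (a) and (c) require the separate facts that an ordinary double point singularity along a curve, and likewise a $\mathsf{D}_\infty$ singularity along a curve, is resolved by a single blow-up of that curve; the latter is not covered by your toric model and the paper cites \cite{zaharia} for it. As written, your argument has no mechanism for the $\mathsf{D}_\infty$ locus, and your list of possible fibers omits the nodal quadric surface, which does occur. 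Relatedly, over a generic point of $D_2$ the fiber of $\varphi$ over a point of the center is a quadric surface (the projectivized normal cone of a double point along a curve), not the $\PP^2\cup_{\PP^1}\PP^2$ configuration, which appears only over the two distinguished points in the II-meets-II fibers; so even in the generic case your fiber description needs correction, though the $CH_0$-triviality conclusion survives.
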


\begin{proof}
As indicated in \cite[Defn.\ 5]{KTinvsurf}, the only singularities of $X$
over the complement of $D^{\mathrm{sing}}$ are
double point singularities
along the section over $D_2\smallsetminus D^{\mathrm{sing}}$.
Over $D^{\mathrm{sing}}$ the singularities of $X$
are described in \cite[\S 3]{KTinvsurf}, and by these descriptions,
the closure in $X$ of the indicated section is isomorphic to the
normalization of $D_2$.
We recall the description.
\begin{itemize}
\item I meets II: $X$ has only ordinary double point singularities along
the copy of $D_2$.
\item II meets II: over a Zariski neighborhood of a point
$z\in D_2^{\mathrm{sing}}$,
\begin{itemize}
\item the copies in $X$ of the two components of $D_2$ containing $z$
intersect $\pi^{-1}(z)$ at distinct points $z'$ and $z''$;
\item $X$ has
ordinary double point singularities generically along the two components
and two additional curves in $\pi^{-1}(z)$, both
containing $z'$ and $z''$;
\item the \'etale local isomorphism type of the singularities of $X$ at
$z'$ and $z''$ is that of the hypersurface singularity $uv=xyz$.
\end{itemize}
\item III meets II and IV meets II: $X$ has singularity of type $\mathsf{D}_\infty$ along the copy of $D_2$.
\end{itemize}
Ordinary double point singularities along a curve in $X$ are resolved by
blowing up the curve.
As indicated in \cite{zaharia}, the same holds for singularities of type
$\mathsf{D}_\infty$.
Over a neighborhood of a singular point of $D_2$, blowing up the copy of the
normalization of $D_2$ amounts to the first step of the indicated resolution
of the hypersurface singularity $uv=xyz$, and the remaining singularities,
ordinary double points along curves, are resolved by blowing up those curves.
With each blow-up, the fiber over any point is a
union of two copies of $\PP^2$ along $\PP^1$ over $k$,
a nodal quadric surface over $k$, or a nonsingular quadric surface over
$k$ or over the function field of a curve over $k$.
Each of these is universally $CH_0$-trivial.
\end{proof}

\section{Brauer group computation}
\label{sec:brauergroup}
Let $Y$ be an algebraic variety over $k$.
For an extension field $F/k$ we let $Y(F)$ denote the set of $F$-rational points
and $Y_F$ the base-change to $F$ of $Y$.

Let $L=k(Y)$ be the function field of $Y$.
We let $\Val_L$ denote the set 
of (geometric) divisorial valuations of $L$; in particular any $v\in \Val_L$
is discrete of rank one and is trivial on $k$.
If $Y$ is normal, we let $\Val_Y\subset \Val_L$ denote the subset 
of divisorial valuations whose centers on $Y$ are irreducible divisors. 
For $v\in \Val_L$
we write $\mo_v$ for the corresponding local ring and
$\kappa_v$ for the residue field.
We denote the henselization by $\mo^h_v$ and its field of fractions
by $K^h_v$.

For a positive integer $\ell$ invertible in $k$ we fix
an isomorphism $\mu_{\ell}\simeq \Z/\ell\Z$.
We write 
$$
H^i(Y):=H^i_{et}(Y,\Z/\ell\Z), \qquad  H^i(L):=H^i(\Spec(L)),
$$
when the coefficients are clear from the context. 
For every $v\in \Val_L$ we have residue homomorphisms
$$
H^i(L)\stackrel{\partial_v}{\longrightarrow} H^{i-1}(\kappa_v).
$$
The unramified cohomology of $Y$ is an invariant of its
function field $L$ as an extension of $k$, defined by 
$$
H^i_{nr}(L/k):=\bigcap_{v\in \Val_L} \ker(\partial_v), 
$$
see  \cite{CTO}.
When the base field is clear from context, we will write $H^i_{nr}(L)$.
When $Y$ is smooth and projective, we also have
$$
H^i_{nr}(L) = \bigcap_{v\in \Val_Y}\ker(\partial_v),
$$
with isomorphisms 
$$
H^1(Y)\cong H^1_{nr}(L)\qquad \text{and} \qquad \Br(Y)[\ell]\cong H^2_{nr}(L).
$$


Let $K$ be a field 
and
$$
G_K=\Gal(\overline{K}/K)
$$ 
the Galois group of a separable closure $\overline{K}$ of $K$.

\begin{prop}
\label{prop.surjective}
Let $K$ be a field of characteristic different from $2$ and
$W$ an involution surface over $K$ with
discriminant extension $L/K$ and Brauer group element $\beta\in \Br(L)$.
The restriction map
\[ 
\Br(K)\to \Br(W), 
\]
is surjective, with kernel
\[
\begin{cases}
\langle \cores_{L/K}(\beta)\rangle,&\text{if $L$ is a field},\\
\langle \beta_1,\beta_2\rangle,&\text{if $L\cong K\times K$,
$\beta=(\beta_1,\beta_2)\in \Br(K)\times\Br(K)$}.
\end{cases}
\]
\end{prop}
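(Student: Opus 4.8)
The plan is to run the Hochschild--Serre (Leray) spectral sequence for the structure morphism $W\to\Spec K$ with $\G_m$-coefficients,
\[ E_2^{p,q}=H^p(G_K,H^q(W_{\overline K},\G_m))\Longrightarrow H^{p+q}(W,\G_m), \]
and to read off both the surjectivity and the kernel from its low-degree terms. Since $W_{\overline K}\cong\PP^1\times\PP^1$ is a rational surface over an algebraically closed field, $H^0(W_{\overline K},\G_m)=\overline K^\times$, $H^1(W_{\overline K},\G_m)=\Pic(W_{\overline K})=\Z^2$, and $H^2(W_{\overline K},\G_m)=\Br(W_{\overline K})=0$, so $E_2^{0,2}=0$. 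I first check that $E_2^{1,1}=H^1(G_K,\Pic(W_{\overline K}))=0$: when $L\cong K\times K$ the module is $\Z^2$ with trivial action and $H^1(G_K,\Z)=0$, while when $L$ is a field it is the induced module $\Z[G_K/G_L]$ for the index-two subgroup $G_L=\Gal(\overline K/L)$, and Shapiro's lemma gives $H^1(G_K,\Z[G_K/G_L])=H^1(G_L,\Z)=0$. Hence $E_\infty^{1,1}=E_\infty^{0,2}=0$, so $H^2(W,\G_m)=E_\infty^{2,0}=\mathrm{coker}(d_2)$, the edge map $\Br(K)=E_2^{2,0}\to\Br(W)$ is exactly the restriction (pullback) map, and (using $\Br(W)=H^2(W,\G_m)$ for the smooth projective surface $W$) this already proves surjectivity and identifies the kernel with the image of
\[ d_2\colon \Pic(W_{\overline K})^{G_K}\longrightarrow \Br(K). \]

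It remains to compute this image. Let $f_1,f_2\in\Pic(W_{\overline K})=\Z f_1\oplus\Z f_2$ be the two ruling classes, so that the Galois action fixes each $f_i$ when $L\cong K\times K$ and interchanges them when $L$ is a field. The claim on which everything hinges is that $d_2$ sends a ruling class to the Brauer class of the corresponding conic: indeed $f_i$ is the pullback of the relative $\cO(1)$ of the associated ruling, and $d_2(f_i)$ is precisely the obstruction to descending that bundle to $W$, which is the class of the conic. When $L\cong K\times K$ the surface splits as $W=C_{\beta_1}\times_K C_{\beta_2}$ with projections $p_i$, the invariant lattice is all of $\Z f_1\oplus\Z f_2$, and $f_i=p_i^*\cO_{C_{\beta_i}}(1)$ descends exactly when $C_{\beta_i}$ is split; the claim then gives $d_2(f_i)=\beta_i$, whence $\im(d_2)=\langle\beta_1,\beta_2\rangle$.

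When $L$ is a field the invariant lattice is the diagonal $\Z(f_1+f_2)$, so the whole computation reduces to identifying $d_2(f_1+f_2)$. Over $L$ the surface splits as $C_\beta\times_L C_{\tau^*\beta}$, so the split case just treated applies over $L$ and gives $d_2^L(f_1)=\beta$ (the choice of factor is immaterial, since $\cores_{L/K}(\tau^*\beta)=\cores_{L/K}(\beta)$). I would then exploit the compatibility of the transfer (corestriction) for $G_L\subset G_K$ with the differentials of the two Hochschild--Serre spectral sequences: on $E_2^{0,1}=H^0$ the corestriction is the norm map $m\mapsto m+\tau^* m$, which sends $f_1$ to $f_1+f_2$, and it commutes with $d_2$, so chasing $f_1$ around yields
\[ d_2^K(f_1+f_2)=\cores_{L/K}\bigl(d_2^L(f_1)\bigr)=\cores_{L/K}(\beta), \]
giving $\im(d_2)=\langle\cores_{L/K}(\beta)\rangle$ (a cyclic group of order dividing $2$, since $\cores_{L/K}(\beta)$ is $2$-torsion), as asserted. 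The main obstacle is exactly this identification of the differential: proving that $d_2$ returns the conic's Brauer class, and that it is compatible with corestriction. I expect to settle it through the explicit $2$-cocycle description of $d_2$---choose a line bundle $\mathcal L$ representing the class, compare $\sigma^*\mathcal L$ with $\mathcal L$ for $\sigma\in G_K$, and read the resulting obstruction in $H^2(G_K,\overline K^\times)=\Br(K)$---which reduces the split case to the class of a Severi--Brauer conic and the field case to the computation of a corestriction.
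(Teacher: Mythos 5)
Your proposal is correct and follows the same route as the paper: both run the Hochschild--Serre spectral sequence for $W\to\Spec(K)$ with $\G_m$-coefficients, use $\Br(W_{\overline K})=0$ and the vanishing of $H^1(G_K,\Pic(W_{\overline K}))$ (trivial action, resp.\ Shapiro's lemma for the induced module $\Z[G_K/G_L]$) to get surjectivity, and identify the kernel with the image of $d_2\colon \Pic(W_{\overline K})^{G_K}\to\Br(K)$. The one place you diverge is the evaluation of that image: the paper simply cites \cite[Prop.~5.3]{CTKM}, whereas you compute $d_2$ directly, using that $d_2$ of a ruling class is the descent obstruction for the corresponding line bundle --- for a conic this is its Brauer class (Lichtenbaum) --- and, in the field case, the compatibility of corestriction with the differentials, which turns $f_1\mapsto f_1+f_2$ on $E_2^{0,1}$ into $d_2^K(f_1+f_2)=\cores_{L/K}(\beta)$. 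Both of these ingredients are standard and your reduction is sound; the only cosmetic caveat is that which of $\beta_1,\beta_2$ is attached to which ruling is a labelling convention, which is immaterial since only the subgroup $\langle\beta_1,\beta_2\rangle$ is asserted. So your argument is a self-contained substitute for the citation rather than a different method.
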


\begin{proof}
Since $\Br(W_{\overline{K}})=0$,
the Hochschild-Serre spectral sequence
$$
H^p(G_K,H^q(W_{\overline{K}},\G_m))\Rightarrow
H^{p+q}(W,\G_m)
$$
gives rise to the exact sequence
\[
0\to \Pic(W)\to \Pic(W_{\overline{K}})^{G_K}\to
\Br(K)\to \Br(W)\to H^1(G_K,\Pic(W_{\overline{K}})),
\]
The Galois group $G_K$ acts on 
$\Pic(W_{\overline{K}})\cong \Z^2$ via the permutation action
on rulings when $L$ is a field, and trivially when $L\cong K\times K$.
In either case, the Galois cohomology $H^1$ vanishes, and the
surjectivity of the restriction map follows.
The description of the kernel is given in \cite[Prop.\ 5.3]{CTKM}.
\end{proof}

\begin{prop}
\label{prop.kernelres}
Let $k$ be an algebraically closed field of characteristic different from $2$,
$S$ a nonsingular algebraic variety over $k$, and 
$$
\pi\colon X\to S
$$
a mildly degenerating simple involution surface bundle,
smooth over $U\subset S$, with degenerate fibers over
\[ D=D_1\cup D_2\cup D_3\cup D_4. \]
With $K=k(S)$ and $W=X_K$ we adopt the further notation of
Proposition \ref{prop.surjective}.
For $v\in \Val_S$ and coefficients $\Z/\ell\Z$, where $\ell$ is a
positive integer, invertible in $k$, the restriction map
\begin{equation}
\label{eqn:rho}  
\rho_v\colon
H^1(\kappa_v)\to \bigoplus_{\substack{w\in \Val_X\\ w|_K=v}}H^1(\kappa_w) 
\end{equation}
is injective when $\ell$ is odd and has kernel
\[
\begin{cases}
0,& \text{if $v\in U$ or $v\in D_1$}, \\
\langle \partial_v(\cores_{L/K}(\beta)) \rangle,& \text{if $v\in D_2$ and
$v$ is inert in $L$}, \\
\langle \partial_{v_1}(\beta),\partial_{v_2}(\beta) \rangle,& \text{if $v\in D_2$, extending
to distinct $v_1$, $v_2\in \Val_L$}, \\
\langle \partial_{v'}(\beta)\rangle, & \text{if $v\in D_3$ with unique extension $v'\in \Val_L$}, \\
\langle \partial_{\varepsilon}(\beta)\rangle,& \text{if $v\in D_4$, marked by
$\varepsilon$ over $v$},
\end{cases}
\]
when $\ell$ is even.
\end{prop}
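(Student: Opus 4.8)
The plan is to reduce the computation of $\ker\rho_v$ to a question about the constant field extensions attached to the components of the special fibre, and then to read these off from the classification of degenerations in \cite{KTinvsurf}. Let $\Gamma_v\subset S$ be the centre of $v$, a prime divisor with generic point $\eta_v$ and $\kappa_v=\kappa(\eta_v)$. The valuations $w\in\Val_X$ with $w|_K=v$ are precisely those whose centres are the irreducible components of $\pi^{-1}(\Gamma_v)$ dominating $\Gamma_v$; over $\eta_v$ these are the irreducible components $E_w$ of the (possibly degenerate) involution surface $X_{\eta_v}$ over $\kappa_v$, with $\kappa_w=\kappa_v(E_w)$. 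Let $\kappa_w^0$ be the algebraic closure of $\kappa_v$ in $\kappa_w$; since any element of $\kappa_w$ separably algebraic over $\kappa_w^0$ is algebraic over $\kappa_v$ and hence already lies in $\kappa_w^0$, the field $\kappa_w^0$ is separably closed in $\kappa_w$. Thus $G_{\kappa_w}\to G_{\kappa_w^0}$ is surjective, $H^1(\kappa_w^0)\to H^1(\kappa_w)$ is injective, and $\ker\rho_v=\bigcap_w\ker\bigl(H^1(\kappa_v)\to H^1(\kappa_w^0)\bigr)$; the whole statement is therefore governed by the extensions $\kappa_w^0/\kappa_v$.

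The odd case then follows at once: by the classification each $\kappa_w^0/\kappa_v$ is trivial or quadratic, and for $\ell$ odd the composite $\cores_{\kappa_w^0/\kappa_v}\circ\mathrm{res}_{\kappa_w^0/\kappa_v}$ on $H^1(-,\Z/\ell\Z)$ is multiplication by $[\kappa_w^0:\kappa_v]\in\{1,2\}$, which is invertible; hence each restriction $H^1(\kappa_v)\to H^1(\kappa_w^0)$ is injective, and so is $\rho_v$.

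For $\ell$ even I would compute the constant fields type by type, using the local descriptions in \cite{KTinvsurf} together with the special fibres underlying Theorem~\ref{thm.resolution}. When $v\in U$ or $v\in D_1$ the fibre $X_{\eta_v}$ is geometrically integral---a smooth involution surface, respectively a Type~I degeneration in which the discriminant cover ramifies but $\beta$ stays unramified---so every $\kappa_w^0=\kappa_v$ and $\ker\rho_v=0$. In the remaining cases the nontrivial constant field extensions are quadratic and cut out by residues of $\beta$: over $D_3$ (Type~III) the discriminant is ramified with unique extension $v'$, and one component has constant field the quadratic extension cut out by $\partial_{v'}(\beta)$; over $D_4$ (Type~IV) the marking $\varepsilon$ singles out the extension of $v$ to $L$ carrying the ramification of $\beta$, with associated character $\partial_\varepsilon(\beta)$; over $D_2$ (Type~II) both rulings degenerate, and the constant fields are cut out by the residues of $\beta$ at the extensions of $v$. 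In every instance the geometric content is that the action of $G_{\kappa_v}$ on the set of geometric components of $X_{\eta_v}$ factors through the quadratic character given by the pertinent residue of $\beta$, of which $\kappa_w^0$ is the fixed field; consequently $\ker\bigl(H^1(\kappa_v)\to H^1(\kappa_w^0)\bigr)$ is generated by that residue class.

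The two sub-cases over $D_2$ are reconciled through the compatibility of residues with corestriction, $\partial_v\circ\cores_{L/K}=\sum_i\cores_{\kappa_{w_i}/\kappa_v}\circ\,\partial_{w_i}$, summed over the extensions $w_i$ of $v$ to $L$. If $v$ is inert, with unique extension $v'$ and $[\kappa_{v'}:\kappa_v]=2$, this gives $\partial_v(\cores_{L/K}\beta)=\cores_{\kappa_{v'}/\kappa_v}(\partial_{v'}\beta)$ and the single generator $\langle\partial_v(\cores_{L/K}\beta)\rangle$; if $v$ splits into $v_1,v_2$ with $\kappa_{v_i}=\kappa_v$, the two rulings contribute independently, yielding $\langle\partial_{v_1}(\beta),\partial_{v_2}(\beta)\rangle$. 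The main obstacle is the geometric input behind the previous paragraph: for each degeneration type, and for the six \'etale local models at the points of $Z$, one must extract from \cite{KTinvsurf} the Galois module of geometric components of the special fibre and verify that its fixed field matches the asserted residue of $\beta$; once this dictionary is in place, the cohomological bookkeeping---intersecting the kernels and applying the corestriction formula---is routine.
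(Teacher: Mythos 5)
Your reduction of $\ker\rho_v$ to the constant fields $\kappa_w^0$ of the components of $X_{\eta_v}$ is sound and is, implicitly, also the paper's strategy: the paper exhibits a dense open subscheme of $X_{\kappa_v}$ of the form $\A^2_\Lambda$ for an explicit \'etale $\kappa_v$-algebra $\Lambda$, whose factors are exactly your $\kappa_w^0$. The cases $v\in U$, $v\in D_1$, $v\in D_3$, $v\in D_4$, and $v\in D_2$ split also come out as you describe (though in the $D_3$ and $D_4$ cases you should note that the fiber has a \emph{single} component with the stated constant field --- an extra component with smaller constant field would shrink the kernel to $0$, so ``one component has constant field $\kappa_v(\sqrt r)$'' is not by itself enough).

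The genuine gap is the case $v\in D_2$ inert, which is the heart of the paper's proof. There $X_{\kappa_v}$ is the Weil restriction along $\kappa_{v'}/\kappa_v$ of the singular conic $X^2-rY^2=0$, and its four geometric components form a $G_{\kappa_v}$-set on which the action does \emph{not} in general factor through a quadratic character: the relevant \'etale algebra is $\Lambda=R_{\kappa_{v'}/\kappa_v}\bigl(\kappa_{v'}(\sqrt r)\bigr)$, which when $N_{\kappa_{v'}/\kappa_v}(r)\notin\kappa_v^{\times 2}$ is a quartic \emph{field} (cyclic or non-Galois over $\kappa_v$, with unique quadratic subfield $\kappa_v(\sqrt{N(r)})$), and when $N(r)\in\kappa_v^{\times2}$ splits into two quadratic fields whose classes differ by the discriminant class $s$ of $\kappa_{v'}/\kappa_v$, forcing the kernel to be $0$. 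Your blanket assertion that ``the action of $G_{\kappa_v}$ on the set of geometric components factors through the quadratic character given by the pertinent residue of $\beta$'' is therefore false here, and the corestriction formula $\partial_v\circ\cores_{L/K}=\cores_{\kappa_{v'}/\kappa_v}\circ\partial_{v'}$ only names the candidate generator $N(r)$; it does not show the kernel equals $\langle N(r)\rangle$ rather than, say, $0$ or $\langle s\rangle$ or something larger. The missing content is precisely the paper's three-way computation of $\Lambda$ according to whether $r$ descends to $\kappa_v$, whether $N(r)$ is a square, or neither. The same oversight infects your odd-$\ell$ argument: the constant field extensions are not all ``trivial or quadratic'' (they can be quartic), though the conclusion survives because the degrees are powers of $2$ and restriction--corestriction still gives injectivity. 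Finally, the six \'etale local models at points of $Z$ are irrelevant to this proposition (they enter only in the later local analysis at $D^{\mathrm{sing}}$), since $v$ is a divisorial valuation of $S$.
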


By abuse of notation, in case $L\cong K\times K$ we consider $v$ as
split in $L$, with $\partial_{v_i}(\beta)=\partial_v(\beta_i)$ for
$i=1$, $2$.
Every component of a Type IV divisor has a \emph{marking}
$\varepsilon\in \Val_L$ extending $v$, with the property
that $\beta$ extends to an element of the Brauer group of
$\Spec(\mo'_v)\smallsetminus \{\varepsilon\}$, where
$\mo'_v$ denotes the integral closure of $\mo_v$ in $L$.
We will employ the notation $\Val_T$ analogously when
$T\cong S\sqcup S$.

\begin{proof}
We proceed via a case-by-case analysis: 
\begin{itemize}
\item 
$v\in U$ or $v\in D_1$.  Then
$X_{\kappa_v}$ is geometrically integral, hence the kernel is trivial.
\item 
$v\in D_2$ and $v$ is inert, extending uniquely to
a valuation $v'$ on $L$.
Then, according to the description of
mildly degenerating simple involution surface bundles from \cite{KTinvsurf},
$X_{\kappa_v}$ is the restriction of scalars under $\kappa_{v'}/\kappa_v$
of the singular conic in $\PP^2_{\kappa_{v'}}$, defined by an equation of the
form
\[ X^2-rY^2=0, \]
where $r\in \kappa_{v'}^\times$ is a representative of
$$
\partial_{v'}(\beta)\in H^1(\kappa_{v'},\Z/2\Z)\cong
\kappa_{v'}^\times/\kappa_{v'}^{\times2}.
$$
The conic contains a dense open subscheme isomorphic to
$\A^1_{\kappa_{v'}(\sqrt{r})}$, hence a dense open subscheme of $X_{\kappa_v}$
is isomorphic to $\A^2_{\Lambda}$, where $\Lambda$ is the coordinate ring of
the restriction of scalars under $\kappa_{v'}/\kappa_v$ of
$\Spec(\kappa_{v'}(\sqrt{r}))$.
\begin{itemize}
\item 
If $r\in \kappa_v$, then 
$$
\Lambda\cong \kappa_v(\sqrt{r})\times \kappa_v(\sqrt{rs}),
$$
where $s\in \kappa_v$ is such that $\kappa_{v'}\cong \kappa_v(\sqrt{s})$.
\item 
If $r\notin \kappa_v$ but
$N_{\kappa_{v'}/\kappa_v}(r)=c^2\in \kappa_v^{\times2}$, then
\[
\textstyle
\Lambda\cong \kappa\big(\sqrt{\tr_{\kappa_{v'}/\kappa_v}(r)+2c}\big)\times
\kappa\big(\sqrt{\tr_{\kappa_{v'}/\kappa_v}(r)-2c}\big),
\]
where we observe that
$$
(\tr_{\kappa_{v'}/\kappa_v}(r)+2c)(\tr_{\kappa_{v'}/\kappa_v}(r)-2c)
$$ 
is
equal to a square times $s$.
\end{itemize}
So, in these two cases, the kernel is trivial.
(The kernel is also trivial when $\partial_{v'}(\beta)=0$,
$\Lambda\cong \kappa_v\times\kappa_v\times\kappa_{v'}$.)
\begin{itemize}
\item 
If $r\notin \kappa_v$ and $N_{\kappa_{v'}/\kappa_v}(r)\notin \kappa_v^{\times2}$, then
$\Lambda$ is a quadratic extension of $\kappa_v(\sqrt{N_{\kappa_{v'}/\kappa_v}(r)})$ which
as extension of $\kappa_v$ is either cyclic or non-Galois.
So the kernel (when $\ell$ is even) is
$$
\langle N_{\kappa_{v'}/\kappa_v}(r)\rangle=
\langle \partial_v(\cores_{L/K}(\beta)) \rangle.
$$
\end{itemize}
\item
$v\in D_2$ and $v$ is split. Then
$X_{\kappa_v}$ is a product of singular conics.
We leave details of this case to the reader.
\item 
$v\in D_3$. Then the construction of mildly degenerating
involution surface bundles given in \cite[Thm.\ 6]{KTinvsurf}
(out of a conic bundle corresponding to the Brauer class $\beta$)
leads to a description of a dense open subscheme of
$X_{\kappa_v}$ as $\A^2_{\kappa_v(\sqrt{r})}$ where
$r\in \kappa_v^\times=\kappa_{v'}^\times$ is a representative of
$\partial_{v'}(\beta)$
(or two copies of $\A^2_{\kappa_v}$ when
$\partial_{v'}(\beta)=0$).
\item 
$v\in D_4$. Then
$X_{\kappa(v)}$ is a product of a singular conic and a
nonsingular conic,
and the kernel is as claimed. \qedhere
\end{itemize}
\end{proof}

Suppose, now, $S$ is a nonsingular projective surface over $k$,
and $X$ is a simple involution surface bundle over $S$.
We are interested in knowing when $\alpha\in \Br(K)[\ell]$
(with $\ell$ invertible in $k$)
restricts under $\pi$ to an element of $\Br(X_K)[\ell]$ that
is unramified, i.e., has trivial residue for all valuations in
$\Val_{k(X)}$, or equivalently, for all valuations in
$\Val_{\widetilde{X}}$, where $\widetilde{X}$ is a desingularization of $X$.
By Proposition \ref{prop.kernelres}, a \emph{necessary} condition for
this is that $\partial_v(\alpha)$ should
belong to the kernel of the map $\rho_v$ in \eqref{eqn:rho}, for all $v\in \Val_S$.
Indeed, there is a commutative diagram
\[
\xymatrix{
\Br(k(X))[\ell] \ar[r]^(0.55){\partial_w} & H^1(\kappa_w) \\
\Br(K)[\ell] \ar[r]^(0.55){\partial_v} \ar[u]^{\rho} & H^1(\kappa_v) \ar[u]
}
\]
as in \cite[\S 1]{CTO}, where the vertical maps are restriction maps and
the factor coming from the valuation under $w$ of a uniformizer of $v$
is always $1$, since $\pi$ is smooth outside of a locus of
codimension at least $2$.
The next result shows that this condition is also sufficient.

\begin{prop}
\label{prop.sufficient}
Let $k$ be an algebraically closed field of characteristic different from $2$,
$S$ a nonsingular surface over $k$ with function field $K$, and 
$\pi\colon X\to S$ a simple involution surface bundle. Let
$\alpha\in \Br(K)[\ell]$, with $\ell$ invertible in $k$, be an element 
such that 
$$
\partial_v(\alpha) \in \mathrm{ker}(\rho_v), \quad \text{ for all } v\in \Val_S.
$$
Then, for every $v\in \Val_S$ with $\partial_v(\alpha)\ne 0$, we have
\[ \alpha\in \ker(\Br(K)[\ell]\to \Br(X_{K^h_v})[\ell]), \]
and, for every $z\in D^{\mathrm{sing}}$, we have
\[ \alpha\in \ker(\Br(K)[\ell]\to \Br(X_{K^h_z})[\ell]), \]
where $K^h_z$ denote the fraction field of the henselization
$\mo^h_z$ of the local ring at $z$.
\end{prop}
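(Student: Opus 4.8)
The plan is to handle both assertions uniformly. Over the relevant henselian field $K^h$ (namely $K_v^h$ or $K_z^h$) the generic fiber $X_{K^h}$ is again an involution surface, with discriminant algebra $L\otimes_K K^h$ and class $\beta$, so Proposition \ref{prop.surjective} applies: the restriction $\Br(K^h)\to\Br(X_{K^h})$ is surjective with kernel $N$ generated by the image of $\cores_{L/K}(\beta)$ when the discriminant algebra stays a field, and by $\beta_1,\beta_2$ when it splits. The asserted vanishing of $\alpha$ in $\Br(X_{K^h})$ is therefore equivalent to the membership $\alpha|_{K^h}\in N$, and the whole argument becomes a matter of detecting this through residues. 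I note at the outset that every kernel $\ker(\rho_v)$ in Proposition \ref{prop.kernelres} is $2$-torsion, so the hypothesis gives $\partial_v(2\alpha)=0$; by the local injectivity statements used below this forces $\alpha|_{K^h}$ to be $2$-torsion, and since $N\subseteq\Br(K^h)[2]$ in any case, we lose nothing by taking $\ell=2$ (for $\ell$ odd both assertions are immediate: the first is vacuous and the second gives $\alpha|_{K_z^h}=0$ by the purity used below).

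For the first assertion, fix $v\in\Val_S$. The residue field $\kappa_v$ is the function field of a curve over the algebraically closed field $k$, hence is $C_1$, so $\Br(\kappa_v)=0$ and the residue $\partial_v\colon\Br(K_v^h)[\ell]\to H^1(\kappa_v)$ is injective. It then suffices to prove $\partial_v(N)=\ker(\rho_v)$: the hypothesis $\partial_v(\alpha)\in\ker(\rho_v)$ produces $\gamma\in N$ with $\partial_v(\gamma)=\partial_v(\alpha)$, whence $\alpha|_{K_v^h}=\gamma\in N$ by injectivity. The equality $\partial_v(N)=\ker(\rho_v)$ is checked generator by generator against the list of Proposition \ref{prop.kernelres}, using that corestriction commutes with base change (so that $\cores_{L/K}(\beta)$ restricts to the generator of $N$) together with the compatibility $\partial_v\circ\cores_{L_v/K_v^h}=\cores_{\kappa_{v'}/\kappa_v}\circ\partial_{v'}$ of residues with corestriction. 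In the inert $D_2$ case this gives $\partial_v(\cores_{L/K}(\beta))$; in the split case the pair $\partial_{v_1}(\beta),\partial_{v_2}(\beta)$; in the ramified $D_3$ case $\partial_{v'}(\beta)$, the residue extension being trivial; and in the $D_4$ case $\partial_\varepsilon(\beta)$ (the unmarked branch contributing $0$), exactly matching the tabulated kernels. For $v\in U$ or $v\in D_1$ the element $\beta$ is unramified, so $N=0=\ker(\rho_v)$ over $K_v^h$, which is consistent.

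For the second assertion, set $R=\mathfrak o_z^h$, a regular two-dimensional henselian local ring with residue field $k$, and $F=K_z^h$. Since $R$ is henselian local, $\Br(R)=\Br(k)=0$, so by purity the total residue map $\Br(F)[\ell]\to\bigoplus_{\mathfrak p}H^1(\kappa_{\mathfrak p})$ over height-one primes is injective. Applying the hypothesis to valuations with center outside $D$, where $\ker(\rho_v)=0$, shows that $\alpha$ is unramified on $S\smallsetminus D$; consequently $\alpha|_F$, and equally each generator of $N$, is ramified only along the two branches $v_1,v_2$ of $D$ through $z$. By injectivity it therefore suffices to exhibit a single $\gamma\in N$ with $\partial_{v_1}(\gamma)=\partial_{v_1}(\alpha)$ and $\partial_{v_2}(\gamma)=\partial_{v_2}(\alpha)$ simultaneously. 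Each residue separately lies in $\ker(\rho_{v_i})=\partial_{v_i}(N)$ by the first assertion, so the only remaining issue is to synchronize the coefficients of $\gamma$ along the two branches.

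This synchronization is the main obstacle. I plan to resolve it with the reciprocity relation $\sum_i\partial^z(\partial_{v_i}(\eta))=0$ in $H^0(k)$ attached to the closed point $z$ (the Kato complex of the two-dimensional local ring being a complex), applied both to $\eta=\alpha$ and to the generators of $N$. Writing $\partial_{v_i}(\alpha)=c_i\,\partial_{v_i}(\cores_{L/K}(\beta))$ in the cyclic (inert) case and comparing second residues at $z$ gives $(c_1-c_2)\,\partial^z(\partial_{v_2}(\cores_{L/K}(\beta)))=0$, which forces $c_1=c_2$ as soon as the Type II branch contributes a nonzero second residue at $z$. To make this unconditional one runs through the crossing types occurring in a simple involution surface bundle, namely I--II, II--II, III--II and IV--II (one branch is always of Type II), using the explicit local models at $z$ to control the second residues of $\beta$ and of $\cores_{L/K}(\beta)$; the split case is handled identically with the two generators $\beta_1,\beta_2$. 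Once $\gamma\in N$ matching both residues is produced, injectivity yields $\alpha|_F=\gamma\in N$, completing the argument.
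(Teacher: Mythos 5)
Your treatment of the first assertion is correct and is essentially the paper's own argument in different clothing: the paper subtracts an explicit element of $N$ with matching residue and invokes $\Br(\mo^h_v)=0$ (Tsen), which is precisely your injectivity of $\partial_v$ on $\Br(K^h_v)[\ell]$ combined with the inclusion $\ker(\rho_v)\subseteq\partial_v(N)$; the generator-by-generator comparison with Proposition \ref{prop.kernelres} is the same computation in both versions.

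For the second assertion, however, the proof is incomplete exactly at the point you yourself call ``the main obstacle.'' The reciprocity argument as stated only forces $c_1=c_2$ when the relevant second residue at $z$ is nonzero, and in the split case, where $N$ has two generators and matching residues along both branches amounts to solving a $2\times 2$ linear system over $\F_2$ of which reciprocity supplies only one relation, you offer nothing beyond the promise to ``run through the crossing types.'' That case analysis is the actual content of the statement at $z\in D^{\mathrm{sing}}$, and it is not carried out. The route can be closed, but the observation needed to do so is missing from your write-up: over the henselization $\mo^h_z$ each branch of $D$ through $z$ is cut out by one of two regular parameters, its residue field $\kappa_{v_i}$ is the fraction field of a henselian discrete valuation ring with residue field $k$, so $H^1(\kappa_{v_i},\Z/2\Z)\cong\Z/2\Z$ is generated by the class of the other parameter and the second residue $H^1(\kappa_{v_i})\to H^0(k)$ is an \emph{isomorphism}; hence ``second residue zero'' already forces ``first residue zero,'' the problematic branch of your dichotomy degenerates, and reciprocity applied to $\alpha$ and to each generator of $N$ does make the system consistent. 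Even granting this, one must still verify it type by type, including $\widehat{X}_{III,II}$, where $L$ does not split over $K^h_z$ and $N$ is cyclic, generated by a corestriction from a ramified quadratic extension. The paper sidesteps the synchronization problem entirely by normalizing $\alpha$ rather than $N$: reciprocity at $z$ shows that \'etale-locally $\alpha$ is either trivial or the symbol $(x,y)$ in the local equations of the two branches, and this is then matched against the \'etale-local symbol form of $\beta$ (with EGA IV 18.8.10 handling the branched cover in case $\widehat{X}_{III,II}$, where $(s,y)$ with $s^2=x$ corestricts to $(x,y)$). You should either adopt that normalization or actually execute the deferred case analysis; as it stands the argument proves the claim only under the unverified nonvanishing hypothesis.
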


\begin{proof}
Suppose first that $v$ is in $D_2$ and is inert in $L$.
Then $\ell$ is even, and
$\partial_v(\alpha)=\partial_v(\cores_{L/K}(\beta))$.
Since 
$$
\cores_{L/K}(\beta))\in \ker(\Br(K)\to \Br(X_K)),
$$ 
it
suffices to show that
\[ \alpha-\cores_{L/K}(\beta)\in \ker(\Br(K)[\ell]\to \Br(K^h_v)[\ell]). \]
By the equality of residues,
$\alpha-\cores_{L/K}(\beta)$ is the restriction of an element of
$\Br(\mo^h_v)$.
But $\Br(\mo^h_v)=0$, since $\kappa_v$ is a $C_1$-field (Tsen's theorem).
The same argument takes care of the cases $v\in D_3$ and $v\in D_4$.

It remains to treat the case that $v$ in $D_2$ is split in $L$,
and the case of $z\in D^{\mathrm{sing}}$.
In this case, we have $L\otimes_KK^h_v\cong K^h_v\times K^h_v$.
We are then reduced to the case $L\cong K\times K$, and we may argue as above,
using
$$
\partial_v(\alpha)\in \{\partial_v(\beta_1),\partial_v(\beta_2),
\partial_v(\beta_1+\beta_2)\}.
$$
For $z\in D^{\mathrm{sing}}$, either $\alpha$ vanishes on an
\'etale neighborhood of $z$, in which case the assertion is trivial, or else
after passing to a suitable \'etale neighborhood we have
$\alpha=(x,y)$ where $x$ and $y$ are local defining equations of the
components of $D$ containing $z$.
We divide into subcases according to the \'etale local isomorphism type of $X$,
using the notation from \cite{KTinvsurf} and noting that cases
$\widehat{X}_{I,II}$ and $\widehat{X}''_{IV,IV}$ are trivial for the
above reason.
In all of the remaining cases, except
$\widehat{X}_{III,II}$, we may assume $L\cong K\times K$ and obtain, by
Proposition \ref{prop.kernelres} (applied after base change to a suitable
\'etale neighborhood) kernel generated by $(x,y)$.
In case $\widehat{X}_{III,II}$ we adopt the notation of \cite[\S 3.3]{KTinvsurf}:
$x$ is a local defining equation of $D_3$, $y$ of $D_2$, and
$L=K(s)$ where $s^2=x$.
Now \cite[18.8.10]{EGAIV} is applicable to the branched degree $2$ covering
of $S$ and tells us that the \'etale local form $(s,y)$
of $\beta\in \Br(L)$ (which we have by the same argument as above) is
achieved after passing to a suitable \'etale neighborhood of $z$ in $S$.
This corestricts to $(x,y)$, and we conclude as before.
\end{proof}

\begin{coro}
\label{cor.sufficient}
Let $k$ be an algebraically closed field of characteristic different from $2$,
$S$ a nonsingular projective surface over $k$ with function field $K$, and
$\pi\colon X\to S$ be a simple involution surface bundle. Let 
$\alpha\in \Br(K)[\ell]$, with $\ell$ invertible in $k$.
Then the following are equivalent:
\begin{itemize}
\item[(i)] $\partial_v(\alpha)\in \ker(\rho_v)$, for every $v\in \Val_S$;
\item[(ii)] $\rho(\alpha)\in H^2_{nr}(k(X)/k)$, i.e., if
$\widetilde{X}$ denotes any desingularization of $X$ then
$\rho(\alpha)$ is the restriction of an element of $\Br(\widetilde{X})[\ell]$.
\end{itemize}
\end{coro}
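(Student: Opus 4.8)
The plan is to prove both implications by exploiting the model-independence of unramified cohomology together with the henselian-local vanishing supplied by Proposition~\ref{prop.sufficient}. Throughout I identify (ii) with the assertion $\rho(\alpha)\in H^2_{nr}(k(X)/k)$, via the isomorphism $\Br(\widetilde X)[\ell]\cong H^2_{nr}(k(X))$ valid for any smooth projective model; since this group is independent of the chosen model, I am free to compute on the explicit desingularization $X''\to X$ produced in Theorem~\ref{thm.resolution} and to use that, for $X''$ smooth projective, $H^2_{nr}(k(X))=\bigcap_{w\in\Val_{X''}}\ker(\partial_w)$. The task then reduces to checking residues at the finitely many families of divisors on $X''$.

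The implication (ii)$\Rightarrow$(i) is the easy direction and is essentially the necessary-condition discussion preceding Proposition~\ref{prop.sufficient}. Fixing $v\in\Val_S$ and $w\in\Val_X$ with $w|_K=v$, such $w$ remains divisorial on $X''$; since $\pi$ is smooth away from codimension $2$, a uniformizer of $v$ stays a uniformizer at $w$, so the residue diagram gives $\partial_w(\rho(\alpha))=\mathrm{res}_{\kappa_v/\kappa_w}(\partial_v(\alpha))$. If $\rho(\alpha)$ is unramified the left side vanishes for every such $w$, whence $\partial_v(\alpha)\in\ker(\rho_v)$ by the definition of $\rho_v$; ranging over all $v\in\Val_S$ yields (i).

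For (i)$\Rightarrow$(ii) I would verify $\partial_w(\rho(\alpha))=0$ for every $w\in\Val_{X''}$, organized by the image of the divisor in $S$. If $w$ is horizontal, i.e. $w|_K$ is trivial, then $K\subset\mo_w$, so $\alpha\in\Br(K)$ lands in $\Br(\mo_w)$ and the residue vanishes. If $w$ is vertical over an irreducible curve $C\subset S$, then $w|_K=v_C\in\Val_S$: when $\partial_{v_C}(\alpha)=0$ the class already dies over $K^h_{v_C}$ (as $\kappa_{v_C}$ is a function field of a curve over $k$, so $\Br(\mo^h_{v_C})\cong\Br(\kappa_{v_C})=0$ by Tsen), and since $\mo^h_{v_C}\subset\mo^h_w$ one gets $\rho(\alpha)|_{K^h_w}=0$; when $\partial_{v_C}(\alpha)\neq 0$, condition (i) with Proposition~\ref{prop.kernelres} forces $v_C$ to be special ($D_2$, $D_3$ or $D_4$), and Proposition~\ref{prop.sufficient} gives $\alpha\in\ker(\Br(K)[\ell]\to\Br(X_{K^h_{v_C}})[\ell])$, which factors through the place of $X_{K^h_{v_C}}$ determined by $w$ to yield $\rho(\alpha)|_{K^h_w}=0$. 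In either case $\partial_w(\rho(\alpha))=0$. Note the $\varphi$-exceptional divisor, lying over the curve $D_2$, is of this vertical type.

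It remains to treat divisors of $X''$ contracted to a point $z\in S$, and here the explicit resolution is decisive: by Theorem~\ref{thm.resolution} the only point-centered exceptional divisors are the $\varphi'$-exceptional ones, which lie over $z\in D_2^{\mathrm{sing}}\subseteq D^{\mathrm{sing}}$. For such $w$ we have $\mo^h_z\subset\mo^h_w$, and Proposition~\ref{prop.sufficient} supplies $\alpha\in\ker(\Br(K)[\ell]\to\Br(X_{K^h_z})[\ell])$; the same factorization through the place attached to $w$ gives $\rho(\alpha)|_{K^h_w}=0$, hence $\partial_w(\rho(\alpha))=0$. Having exhausted all $w\in\Val_{X''}$, we conclude $\rho(\alpha)\in H^2_{nr}(k(X))$. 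The hard part is exactly this last family: condition (i) is a purely codimension-$1$ constraint on $S$ and says nothing about divisors created over the points where components of $D$ meet. It is precisely to control these that one needs the henselian-local vanishing at $z\in D^{\mathrm{sing}}$ from Proposition~\ref{prop.sufficient}, and it is the explicit geometry of the resolution---only $D_2$, and in particular $D_2^{\mathrm{sing}}$, contributes point-centered exceptional divisors---that keeps the relevant points confined to $D^{\mathrm{sing}}$.
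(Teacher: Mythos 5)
Your proof is correct, and it follows the same overall strategy as the paper: the easy direction via the commutative residue diagram, and the hard direction by sorting divisorial valuations according to the center on $S$ of their restriction to $K$ and feeding the henselian-local vanishing of Proposition \ref{prop.sufficient} into each case. The genuine difference is your opening reduction: you invoke model-independence of $H^2_{nr}$ to compute on the explicit desingularization $X''$ of Theorem \ref{thm.resolution}, so that the only divisors centered over points of $S$ are the $\varphi'$-exceptional ones over $D_2^{\mathrm{sing}}$. The paper instead ranges over \emph{all} of $\Val_{k(X)}$, and must therefore also treat valuations whose restriction to $K$ is centered at an arbitrary $k$-point $z\in S$; this forces an extra case $z\notin D^{\mathrm{sing}}$ (disposed of by showing $\alpha$ dies on an \'etale neighborhood of such $z$), which your argument eliminates entirely. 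A second, smaller difference: the paper passes from the henselian statements of Proposition \ref{prop.sufficient} to \'etale neighborhoods $S'\to S$ and uses compatibility of residues with \'etale restriction, whereas you work directly with the inclusions $\mo^h_v\subset\mo^h_w$ and $\mo^h_z\subset\mo^h_w$ and restrict Brauer classes along them; both are standard and correct, and yours is arguably more economical. What your route buys is a shorter case analysis at the cost of depending on the resolution theorem (and on the fact that $H^2_{nr}$ can be computed on any one smooth projective model); what the paper's route buys is independence from any particular model, and incidentally a uniform treatment of all points of $D^{\mathrm{sing}}$, not just those of $D_2^{\mathrm{sing}}$. You also handle the subcase $\partial_{v_C}(\alpha)=0$ explicitly via Tsen, which the paper's proof leaves implicit.
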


\begin{proof}
By Proposition \ref{prop.kernelres}, (ii) implies (i).
Now suppose (i) is satisfied.
We need to show that for any $w\in \Val_{k(X)}$ we have
$\partial_w(\alpha)=0$.
Since $\rho(\alpha)\in \Br(X_K)[\ell]$, the residue is trivial for all
valuations that restrict to the trivial valuation on $K$.
So we consider only valuations $w$ restricting nontrivially to $K$.

Suppose, first, that $w$ restricts to some $v\in \Val_S$.
By Proposition \ref{prop.sufficient},
there exist an \'etale morphism $S'\to S$ and $v'\in \Val_{S'}$
extending $v$ and inducing an isomorphism on residue fields, such that
\begin{equation}
\label{eqn.inker}
\alpha\in \ker(\Br(K)[\ell]\to \Br(X_{K'})[\ell]),
\end{equation}
where $K'=k(S')$.
Since the residue homomorphism commutes with restriction under an
\'etale morphism, we have $\partial_w(\alpha)=0$.

It remains to consider the case that the restriction of $w$ to $K$ is
centered on some $k$-point $z\in S$.
We will show that there exists a pointed \'etale neighborhood $(S',z')$
of $(S,z)$ for which \eqref{eqn.inker} holds, where $K'=k(S')$.
As before, the vanishing of $\partial_w$ follows.
If $z\notin D^{\mathrm{sing}}$ then $\alpha$ restricts to $0$
in $\Br(K')[\ell]$ for some \'etale neighborhood.
If $z\in D^{\mathrm{sing}}$, then we are done by Proposition \ref{prop.sufficient}.
\end{proof}

In case $S$ is a nonsingular projective \emph{rational} surface,
$\Br(S)=0$ and elements of $\Br(K)[\ell]$ are described completely with
ramification data, according to the exact sequence from \cite[Thm.\ 1]{AM}:
\begin{equation}
\label{eqn.exactsequenceAM}
0\to \Br(K)[\ell]\to \bigoplus_{v\in \Val_S} H^1(\kappa_v)\to
\bigoplus_{z\in S(k)} \Z/\ell\Z.
\end{equation}
In particular, in this setting,
condition (i) in Corollary \ref{cor.sufficient} forces
$\alpha$ to be $2$-torsion in $\Br(K)$.
In light of this, we take $\ell=2$ and work with coefficients $\Z/2\Z$ in the
following concrete description of the unramified Brauer group of an
involution surface bundle over a projective rational surface.

\begin{theo}
\label{thm.rationalsurfacecase}
Let $k$ be an algebraically closed field of characteristic different from $2$,
$S$ a nonsingular projective rational surface over $k$ with
function field $K$, and
$\pi\colon X\to S$ a simple involution surface bundle, such that
the associated conic bundle under the correspondence of
\cite[Thm.\ 13]{KTinvsurf} is a standard conic bundle over a
degree $2$ covering $T\to S$.
Define 
\begin{itemize}
\item 
$L=k(T)$, when $T$ is irreducible, and 
\item $L=K\times K$, when
$T=S\sqcup S$,
\end{itemize}
and let
$\beta\in \Br(L)[2]$ be the class of the standard conic bundle over $T$.
Define
\begin{align*}
\mathcal{S}&=\begin{cases}
0,&\text{if $L$ is a field, $\cores_{L/K}(\beta)=0$},\\
\F_2,&\text{if $L$ is a field, $\cores_{L/K}(\beta)\ne 0$},\\
\bigoplus_{\substack{\beta_i\ne 0\\ i=1\text{ or }\beta_1\ne\beta_2  }} \F_2,&\text{if $L=K\sqcup K$, $\beta=(\beta_1,\beta_2)\in \Br(K)\times \Br(K)$},
\end{cases} \\
\mathcal{P}&=\bigoplus_{v'\in \Val_{\mathcal{P}}} \F_2,\ \Val_{\mathcal{P}}
=\{v'\in \Val_T\,|\,\partial_{v'}(\beta)\ne 0\}, \\
\mathcal{Q}&=\bigoplus_{v\in \Val_{\mathcal{Q}}} \F_2,\ \Val_{\mathcal{Q}}
=\{v\in \Val_S\,|\,\text{$\partial_v(\cores_{L/K}(\beta))=0$, $\exists\,v'\in \Val_{\mathcal{P}}$: $v'|_K=v$}\}, \\
\mathcal{R}&=\bigoplus_{z\in \mathcal{Z}_{\mathcal{R}}} \F_2,\ \mathcal{Z}_{\mathcal{R}}
=\{z\in D^{\mathrm{sing}}\,|\,\text{type
$\widehat{X}_{II,II}$, $\widehat{X}_{III,II}$,
$\widehat{X}_{IV,II}$, or $\widehat{X}'_{IV,IV}$}\},
\end{align*}
where $D$ denotes the simple normal crossing divisor over which
$\pi$ has singular fibers.
We define homomorphisms 
\begin{itemize}
\item 
$\mathcal{S}\to \mathcal{P}$ as the
diagonal inclusion when $L$ is a field, and the product of
diagonal inclusions according to the convention of
$\Val_T$ stated after Proposition \ref{prop.kernelres},
otherwise;
\item 
$\mathcal{Q}\to \mathcal{P}$ by the relation of extension of valuations, and
\item 
$\mathcal{P}\to \mathcal{R}$ by the relation in Table \ref{table1}.
\end{itemize}
Then 
\begin{itemize}
\item 
$\mathcal{S}\to \mathcal{P}$ and $\mathcal{Q}\to \mathcal{P}$ are
injective with trivially intersecting images,
\item 
the composite $\mathcal{Q}\to \mathcal{R}$ is zero,
and 
\item 
the recipe of Table \ref{table2}
identifies
$\mathcal{S}$ with $\ker(\Br(K)\to \Br(X_K))$ and 
$\ker(\mathcal{P}/\mathcal{Q}\to \mathcal{R})$
with the pre-image
of the subgroup $H^2_{nr}(k(X)/k)$.
\end{itemize}
\end{theo}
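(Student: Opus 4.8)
The plan is to pass everything to the language of ramification data on $S$. Since $S$ is rational, the Artin--Mumford sequence \eqref{eqn.exactsequenceAM} makes $\Br(K)[2]$ completely explicit: an element is determined by its residue tuple $(\partial_v(\alpha))_{v\in\Val_S}$, which may be an arbitrary element of $\bigoplus_v H^1(\kappa_v)$ subject only to the reciprocity relations given by the second map of \eqref{eqn.exactsequenceAM} at the points $z\in S(k)$. By Corollary \ref{cor.sufficient}, the pre-image of $H^2_{nr}(k(X)/k)$ is exactly the set of $\alpha\in\Br(K)[2]$ with $\partial_v(\alpha)\in\ker(\rho_v)$ for every $v\in\Val_S$. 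The first identification, of $\mathcal{S}$ with $\ker(\Br(K)\to\Br(X_K))$, is essentially a restatement of Proposition \ref{prop.surjective}: the listed generators, $\cores_{L/K}(\beta)$ when $L$ is a field and $\beta_1$, $\beta_2$ when $L\cong K\times K$, have ramification supported on $\Val_{\mathcal{P}}$, and reading off their residues as prescribed by Table \ref{table2} yields both the injection $\mathcal{S}\hookrightarrow\mathcal{P}$ and this identification. I would settle this part first.

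For the main identification I would build a comparison homomorphism
$$
\Phi\colon \{\alpha\in\Br(K)[2]\,:\,\partial_v(\alpha)\in\ker(\rho_v)\ \forall v\}\longrightarrow \mathcal{P}
$$
by reading $\partial_v(\alpha)$ against the generators of $\ker(\rho_v)$ furnished by Proposition \ref{prop.kernelres}: at $v\in D_3$, or at $v\in D_4$ with marking $\varepsilon$, one writes $\partial_v(\alpha)=c\,\partial_{v'}(\beta)$ and records $c$ at $v'$; at a split $v\in D_2$ one writes $\partial_v(\alpha)=c_1\partial_{v_1}(\beta)+c_2\partial_{v_2}(\beta)$ and records $(c_1,c_2)$; at an inert $v\in D_2$ one records at the unique $v'$ the coefficient of $\partial_v(\cores_{L/K}(\beta))$. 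These coefficients are unambiguous precisely when the relevant residues of $\beta$ are linearly independent. The two sources of ambiguity---an inert $v\in D_2$ with $\partial_v(\cores_{L/K}(\beta))=0$ while $\partial_{v'}(\beta)\neq0$, forcing $\partial_v(\alpha)=0$, and a split $v\in D_2$ with $\partial_{v_1}(\beta)=\partial_{v_2}(\beta)\neq0$---are exactly the indices constituting $\Val_{\mathcal{Q}}$, and in each the ambiguity is a shift by the image under the extension map $\mathcal{Q}\to\mathcal{P}$ of the corresponding generator. Hence $\Phi$ descends to $\mathcal{P}/\mathcal{Q}$, and it is injective because $\Phi(\alpha)=0$ forces all residues of $\alpha$ to vanish, whence $\alpha=0$ by \eqref{eqn.exactsequenceAM}. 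Injectivity of $\mathcal{Q}\to\mathcal{P}$ (distinct elements of $\Val_{\mathcal{Q}}$ have disjoint fibres in $\Val_T$) and the triviality of the intersection of the images of $\mathcal{S}$ and $\mathcal{Q}$ are then routine bookkeeping with the same patterns.

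It remains to match the image of $\Phi$ with $\ker(\mathcal{P}/\mathcal{Q}\to\mathcal{R})$. Here the map $\mathcal{P}\to\mathcal{R}$ of Table \ref{table1} should encode the reciprocity relation of \eqref{eqn.exactsequenceAM} at the points $z\in D^{\mathrm{sing}}$ of the four types listed in $\mathcal{Z}_{\mathcal{R}}$: a residue pattern on $\Val_{\mathcal{P}}$ is realized by a class in $\Br(K)[2]$ if and only if reciprocity holds at every $k$-point of $S$, and the content is that this defect vanishes automatically away from $\mathcal{Z}_{\mathcal{R}}$ and is measured exactly by the $\mathcal{R}$-component there. The vanishing of the composite $\mathcal{Q}\to\mathcal{R}$ then records that the $\mathcal{Q}$-ambiguities, which occur only where $\cores_{L/K}(\beta)$ is unramified, never disturb reciprocity, so that $\mathcal{P}/\mathcal{Q}\to\mathcal{R}$ is well defined; surjectivity of $\Phi$ onto this kernel follows by realizing any reciprocity-respecting pattern as the residue datum of some $\alpha$ via \eqref{eqn.exactsequenceAM} and checking, type by type, that $\alpha$ then meets hypothesis (i) of Corollary \ref{cor.sufficient}.

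The principal obstacle I expect is the computation underlying Table \ref{table1}: one must analyse each of the six \'etale isomorphism types of $\pi$ at a point of $D^{\mathrm{sing}}$, pass to the pointed \'etale neighborhoods used in Proposition \ref{prop.sufficient}, and decide for each type whether, and how, the reciprocity defect depends on the incident flags of $\mathcal{P}$. This case analysis, together with the parallel determination of the explicit generators recorded in Table \ref{table2}---including the norm and corestriction bookkeeping that separates the inert, split, and ramified behaviours of $T\to S$---is where the genuine work lies; the remaining assertions are formal consequences of Propositions \ref{prop.surjective}--\ref{prop.sufficient} together with the Artin--Mumford sequence.
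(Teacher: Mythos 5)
Your proposal is correct and follows essentially the same route as the paper's own proof: reduce to ramification data via the Artin--Mumford sequence \eqref{eqn.exactsequenceAM}, invoke Corollary \ref{cor.sufficient} and Proposition \ref{prop.kernelres} to characterize the pre-image of $H^2_{nr}(k(X)/k)$, match the kernels with $\mathcal{P}/\mathcal{Q}$ via Table \ref{table2} and the point compatibilities with Table \ref{table1}, and identify $\mathcal{S}$ using Proposition \ref{prop.surjective}. The paper states these steps tersely as ``readily verified'' checks; your comparison homomorphism $\Phi$ and the analysis of the $\mathcal{Q}$-ambiguities simply make explicit the same verification.
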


The elements of $\Val_{\mathcal{P}}$ correspond to the components of
the pre-image of $D_2$, the components of
$D_3$, and the marked components over $D_4$.
The last statement of the theorem
is summarized by the following commutative diagram with exact rows:
\[
\xymatrix{
0 \ar[r] & \mathcal{S} \ar[r] \ar@{=}[d] &
\ker(\mathcal{P}/\mathcal{Q}\to \mathcal{R}) \ar[r] \ar@{^{(}->}[d] & H^2_{nr}(k(X)/k) \ar[r] \ar@{^{(}->}[d] & 0 \\
0 \ar[r] & \mathcal{S} \ar[r] & \Br(K) \ar[r] & \Br(X_K) \ar[r] & 0
}
\]

\begin{table}
\[
\begin{array}{l|c|c|c}
&\text{$v'|_K=v$ Type II}&\text{$v$ Type III}&\text{$v$ Type IV}\\ \hline
\text{$z$ Type $\widehat{X}_{II,II}$} & s & & \\
\text{$z$ Type $\widehat{X}_{III,II}$} & \times & \times & \\
\text{$z$ Type $\widehat{X}_{IV,II}$} & m & & \times \\
\text{$z$ Type $\widehat{X}'_{IV,IV}$} & & & \times
\end{array}
\]
\caption{Relation between $\Val_{\mathcal{P}}$ and $\mathcal{Z}_{\mathcal{R}}$.
For $v'\in \Val_{\mathcal{P}}$, restricting to $v\in \Val_S$ corresponding to
a divisor containing $z\in \mathcal{Z}_{\mathcal{R}}$ the symbol
$\times$ indicates that $v'$ is related to $z$;
$s$ indicates that $v'$ is related to $z$ when $v$ is split in $L$;
$m$ indicates that $v'$ is related to $z$ when
the divisor corresponding to $v'$ meets the marked Type IV component
at a point above $z$.}
\label{table1}
\end{table}

\begin{table}
\[
\begin{array}{l|c|c|c}
&\text{$v'|_K=v$ Type II}&\text{$v$ Type III}&\text{$v$ Type IV} \\ \hline
\text{inert} & \partial_v(\cores_{L/K}(\beta)) & & \\
\text{split} & \partial_{v'}(\beta) & & \partial_{v'}(\beta) \\
\text{ramified} & & \partial_{v'}(\beta) &
\end{array}
\]
\caption{Homomorphism $\mathcal{P}\to \bigoplus_{v\in \Val_{\mathcal{Q}}}H^1(\kappa_v)$
determining $\ker(\mathcal{P}\to \mathcal{R})\to \Br(K)[2]$ by the
representation of an element of $\Br(K)[2]$ by ramification data
in $\bigoplus_{v\in \Val_S}H^1(\kappa_v)$.}
\label{table2}
\end{table}

\begin{rema}
\label{rema.rationalsurfacecase}
We remark that the construction in
\cite[Thm.\ 10]{KTinvsurf} of good models of
involution surface bundles
(i.e., models which are simple involution surface bundles)
proceeds via a standard conic bundle over a degree $2$ covering of
a birational model of the base surface, and hence these particular
simple involution surface bundles satisfy the condition stated in
Theorem \ref{thm.rationalsurfacecase}.
\end{rema}

\begin{proof}[Proof of Theorem \ref{thm.rationalsurfacecase}]
We use the exact sequence \eqref{eqn.exactsequenceAM}, which identifies
$\Br(K)[2]$ with ramification data at divisors satisfying compatibility
conditions at points.
The assertions about $\mathcal{S}\to \mathcal{P}$,
$\mathcal{Q}\to \mathcal{P}$, and
$\mathcal{Q}\to \mathcal{R}$ are readily verified.
By Corollary \ref{cor.sufficient},
the pre-image of $H^2_{nr}(k(X)/k)$ under $\Br(K)\to \Br(X_K)$
consists of elements whose ramification data are
constrained to lie in the kernels described in
Proposition \ref{prop.kernelres}.
The direct sum of these, we check, is
identified with $\mathcal{P}/\mathcal{Q}$ by the
homomorphism described in Table \ref{table2}.
We check, as well, that the homomorphism encoded by
Table \ref{table1} corresponds to the compatibility conditions at
points from \eqref{eqn.exactsequenceAM}.
Finally, $\mathcal{S}$ is identified with the kernel of
$\Br(K)\to \Br(X_K)$ from Proposition \ref{prop.surjective}.
\end{proof}

\section{Example}
\label{sec:example}
Here we demonstrate Theorem \ref{thm.rationalsurfacecase}
on the example from \cite{HPT}:
\[
X:\ \ yzs^2+xzt^2+xyu^2+(x^2+y^2+z^2-2xy-2xz-2yz)v^2=0.
\]
This is a hypersurface in $\PP^3\times \PP^2$,
where the respective factors have homogeneous coordinates
$s$, $t$, $u$, $v$ and $x$, $y$, $z$, and is
a quadric surface bundle over $S=\PP^2$ with
discriminant extension given by the degree $2$ covering branched over
\[ C:\ \ x^2+y^2+z^2-2xy-2xz-2yz=0. \]

The fourfold $X$ appears (modulo birational transformations) as the limit of several interesting families of varieties 
\cite{HPT}, \cite{HPTdouble}, \cite{HPT-quadric}, \cite{auel-pir}, \cite{schreieder1}; higher-dimensional variants are also in \cite[Sect. 3]{schreieder-recent}. 
The presence of nontrivial unramified cohomology in $X$, 
together with a verification of $CH_0$-triviality of a resolution of singularities of $X$, 
show that very general members of those families fail stable rationality.

We write the double cover $T\to S$ as
\[ T:\ \ w^2=x^2+y^2+z^2-2xy-2xz-2yz, \]
a nonsingular quadric surface.
The Brauer group element is
\[ \beta=(xz^{-1},yz^{-1})\in \Br(\C(T)). \]
The quadric surface bundle is, however, not a simple involution surface bundle.
Indeed, the fibers generically along any coordinate line in $\PP^2$ are not
of any of the four permitted degeneration types.

According to the construction in \cite[Thm.\ 10]{KTinvsurf} of a
simple involution surface bundle, we should blow up $S$ as needed so that
the locus
\[
D_{xyz}\cup D_{xzy}\cup D_{yxz}\cup D_{yzx}\cup D_{zxy}\cup D_{zyx},\quad
D_{xyz}: x=0, w=y-z,\ \text{etc.}
\]
on $T$ where $\beta$ is ramified is
a simple normal crossing divisor, which additionally has normal crossings
with the ramification locus
\[ w=0 \]
of $T\to S$.
The first of these conditions is satisfied, but the additional condition
fails since pairs of divisors such as $D_{xyz}$ and $D_{xzy}$ intersect
at points with $w=0$.

Blowing up $S$ at the points $(0:1:1)$, $(1:0:1)$, $(1:1:0)$ yields
exceptional divisors $D_x$, $D_y$, $D_z$.
When we do this, $T$ transforms to a singular surface, whose resolution
requires blowing at $3$ more points to obtain
$\widetilde{S}$ with $3$ more exceptional divisors
$E_x$, $E_y$, $E_z$.
The degree $2$ cover $\widetilde{T}$ is nonsingular, with covering map
\[ \tilde\psi\colon \widetilde{T}\to \widetilde{S} \]
branched over
\[ C'\cup D'_x\cup D'_y\cup D'_z, \]
where primes denote proper transforms.
The locus on $\widetilde{T}$ where $\beta$ ramifies is
\begin{equation}
\label{eqn.xyz}
D'_{xyz}\cup D'_{xzy}\cup D'_{yxz}\cup D'_{yzx}\cup D'_{zxy}\cup D'_{zyx}.
\end{equation}
On $\widetilde{S}$, the Type I locus is $C'\cup D'_x\cup D'_y\cup D'_z$,
and the Type II locus consists of the proper transforms of the
coordinate axes, which all split in $\widetilde{T}$;
see Figure \ref{fig1}.

\begin{figure}
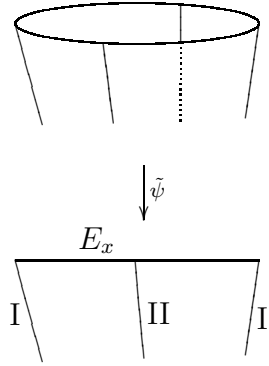

\[
\xy /r9mm/:
  (0.0,3.5)="c";
"c"+(1.8,0.0)*
\xycircle(1.8,0.3){-};
"c"+(0.0,0.0);"c"+(0.4,-1.5)**@{-};
"c"+(3.6,0,0);"c"+(3.4,-1.4)**@{-};
"c"+(1.298,-0.293);"c"+(1.45,-1.465)**@{-};
"c"+(2.4464,-0.2864);"c"+(2.44,-1.432)**@{.};
"c"+(2.4496,0.2864);"c"+(2.4464,-0.2864)**@{-};
"c"+(1.9,-2.1);"c"+(1.9,-2.9)**@{-} ?>*\dir{>};
"c"+(2.1,-2.45)*+{\scriptstyle\tilde\psi};
  (0.0,0.0)="c";
"c"+(0.0,0.0);"c"+(3.6,0.0)**@{-};
"c"+(0.0,0.0);"c"+(0.4,-1.5)**@{-};
"c"+(3.6,0,0);"c"+(3.4,-1.4)**@{-};
"c"+(1.77,0.0);"c"+(1.9,-1.45)**@{-};
"c"+(0.0,-0.8)*+{\mathrm{I}};
"c"+(3.65,-0.85)*+{\mathrm{I}};
"c"+(2.12,-0.78)*+{\mathrm{II}};
"c"+(1.2,0.3)*+{E_x};
\endxy
\]
\caption{Graphical representation of
covering $\tilde\psi\colon \widetilde{T}\to \widetilde{S}$ near
$E_x$, which meets $C'$ and $D'_x$ (both of Type I) and
the proper transform of the coordinate axis $x=0$ (Type II).}
\label{fig1}
\end{figure}

Since every intersection of components in \eqref{eqn.xyz}
is a branch point of the covers which describe the ramification of
$\beta$, there exist standard conic bundles over $\widetilde{T}$
with Brauer class $\beta$ and, correspondingly,
simple involution surface bundles over $\widetilde{S}$.
We may use any such involution surface bundle for the computation of the
unramified Brauer group of $X$
via Theorem \ref{thm.rationalsurfacecase}.

We have
\[ \mathcal{S}=0,\qquad \mathcal{P}=\F_2^6,\qquad
\mathcal{Q}=\F_2^3,\qquad \mathcal{R}=\F_2^3. \]
Ordering the basis of $\mathcal{P}$ as in \eqref{eqn.xyz}, we have image
of $\mathcal{Q}$ spanned by
\[ (1,1,0,0,0,0),\qquad (0,0,1,1,0,0),\qquad (0,0,0,0,1,1), \]
and matrix representation
\[
\begin{pmatrix}
0&0&1&1&1&1\\
1&1&0&0&1&1\\
1&1&1&1&0&0
\end{pmatrix}
\]
of $\mathcal{P}\to \mathcal{R}$, with basis
$(1:0:0)$, $(0:1:0)$, $(0:0:1)$ of $\mathcal{R}$.

Now
\[ \ker(\mathcal{P}/\mathcal{Q}\to \mathcal{R})\cong \F_2, \]
generated by, e.g., $(1,0,1,0,1,0)$, which corresponds to a Brauer group
element that is ramified along each of the three coordinate axes:
\[ (xz^{-1},yz^{-1})\in \Br(\C(S)). \]

Key to this example is the presence of components of the
Type II locus that split in the double cover.
If we start with a quadric surface bundle and hence
$\beta\in \C(T)$ with $\cores_{L/K}(\beta)=0$,
then $\mathcal{P}/\mathcal{Q}=0$ unless some
Type II component splits.
It is not essential, however, to have singular Type II locus.
For instance, there exist nonsingular cubic and quartic curves
in $S=\PP^2$ that meet with tangency at $6$ points lying on a conic.
If we let the quartic curve determine $T$, then the pre-image in $T$ of the
cubic curve has two irreducible components.
We take $\beta\in \Br(\C(T))$ to be the restriction of the
class in $\Br(\C(S))$ determined by a nontrivial unramified degree $2$ cover
of the cubic curve and $X\to S$ a corresponding quadric surface bundle.
In a manner analogous to that described above,
$\widetilde{S}$ and $\widetilde{T}$ may be obtained
by blowing up $6$ points on $S$ and again $6$ points.
Over $\widetilde{S}$ there
is a model of $X$ which is a simple involution surface bundle with
disjoint smooth Type I and II loci.
Theorem \ref{thm.rationalsurfacecase} yields $H^2_{nr}(\C(X))\cong \Z/2\Z$.


\bibliographystyle{plain}
\bibliography{pn}

\end{document}